\def\NZQ{\Bbb}               
\def\RR{{\NZQ R}}
\newtheorem{Theorem}{Theorem}
\newtheorem{Assumption}{Assumption}
\newtheorem{Definition}{Definition}
\newtheorem{Lemma}{Lemma}
\newtheorem{Remark}{Remark}
\newcommand{\mc}{\mathcal}
\title{\LARGE \bf
A mean field approach to model flows of agents with path preferences over a network 
}
\author{Fabio Bagagiolo$^{1}$, Rosario Maggistro$^{2}$, Raffaele Pesenti$^{2}$ 
	\thanks{*This work was partially supported by fund POR FSE 2014-2020 - CUP H76C18000250005 (codice 2120-19-11-2018).}
	\thanks{$^{1}$Department of Mathematics, Universit\`{a} di Trento, Via Sommarive, 14, 38123 Povo, Trento, Italy  {\tt\small fabio.bagagiolo@unitn.it}.}
	\thanks{$^{2}$Department of Management, Universit\`{a} Ca' Foscari Venezia,
		Fondamenta S. Giobbe, 873, 30121 Cannaregio, Venezia, Italy
		{\tt\small rosario.maggistro@unive.it, pesenti@unive.it}.}
}
\begin{document}

\maketitle
\thispagestyle{empty}
\pagestyle{empty}

\begin{abstract}
In this paper, we address the problem
of modeling the traffic flow of a heritage city
whose streets are represented by a network. We consider a mean field approach where the standard forward backward system of equations is  also intertwined with a path preferences dynamics. The path preferences are influenced by the congestion status on the whole
network as well as the possible hassle of being forced to run during the tour. 
We prove the existence of a mean field equilibrium as a fixed point, over a suitable set of time-varying distributions, of a map obtained as a limit of a sequence of approximating functions. Then, a bi-level optimization problem is formulated for an external controller who aims to induce a specific mean field  equilibrium.\\
\end{abstract}

\begin{keywords} 
	Traffic flow optimal control; mean field games; path preference dynamics; dynamical flow networks.
\end{keywords}

\section{INTRODUCTION}

	In the recent years, the continuous growth of traffic flow  and the resulting overcrowding have led some cities 
to seek solutions to manage this phenomenon.
The crowd motion modeling and the study of flow dynamics have become in the last decades two of the main targets of the transportation research community. Different modeling approaches have been proposed which can generally be
classified into two categories: microscopic models and macroscopic
models. The former include the cellular automaton
model, the social force model, and the lattice
gas
model (see e.g., \cite{Burstedde}--\cite{Guo}),
and are particularly well suited for use
with small crowds. Macroscopic models, in contrast, focus
on the overall behavior of pedestrian flows and are more
applicable to investigations of extremely large crowds, especially
when examining aspects of motion in which individual
differences are less important \cite{Hughes}--\cite{Hoogendoorn20041}.
In this paper, starting from the results in \cite{bagpes}--\cite{bafama}, we introduce a Mean Field (MF) approach to modeling and analytically studying
the flow of daily agents
along the street of a heritage city.
Mean field games (MFG) theory goes back to the seminal work by Lasry-Lions \cite{LLions} (see also \cite{HCAINMAL}). This theory
includes methods and techniques to study differential games with a large
population of rational players and it is based on the assumption that the
population influences individuals' strategies through mean field
parameters. Several application domains such as economic, physics, biology
and network engineering accommodate MFG theoretical models (see
\cite{AcCamDolc}--\cite{Guent2011}). In particular, models to 
study of dynamics on networks and/or pedestrian movement can be found for
example in \cite{CCMar}--\cite{BBMZop}.
In this paper,  
beside the usual framing of mean field games (which is typically defined by the pair made of Hamilton-Jacobi-Bellman and transport
equations), we consider the agent's path preferences dynamics. In particular, we assume that it evolves following a perturbed best response to global information about the congestion status of the whole network and to the control vector. Moreover,
this path preferences
dynamics evolves at a slow time scale as compared
to the physical dynamics. 
We apply these arguments to the possible paths considering a network topology as in Figure~\ref{graphtopology}.

Our main result shows the existence of a MF equilibrium for our framework. This equilibrium is a time-varying distribution of agents,  $\widetilde{\rho}(t)$ for $t\in[0,T]$, on the network.
Distribution  $\widetilde{\rho}(t)$, when plugged in the cost to minimize, generates an optimal control $%
\widetilde{u}(t)$, for any agent starting at the origin $o$ 
which, in turn, yields the path preference $z(t)$ providing the time-varying distribution~$\widetilde{\rho}$.
We also introduce a possible bi-level optimization
problem for an external controller who aims to force the equilibrium to be as close as possible (in uniform topology) to a reference value $\widetilde{\rho}$.
We suppose that the external controller (the city hall, for example) may act on the congestion functions choosing them among a suitable set of admissible functions.

The rest of this paper is organized as follows. In Section \ref{sec:2}, we describe the model and state the hypotheses used in the paper. In Section \ref{sec:3}, we claim the existence of a MF equilibrium
 (due to the space limitation, we only include the sketches of the proofs) 
 and, in Section \ref{sec:4}, we define a bi-level optimization problem.
In Section \ref{sec:5}, we draw conclusions and suggests future works.

\section{MODEL DESCRIPTION}\label{sec:2}

\subsection{Network characteristics}

We model the topology of the network as a directed multi-graph $\mathcal{G=(V, E)}$, where $\mathcal{V}$ is a finite set of nodes and $\mathcal{E}$ is a finite set of directed links. Each link $e = (\nu_e,\kappa_e)$ in $\mc E$ is directed from its tail node $\nu_e$ to its head node $\kappa_e\neq \nu_e$. 
An oriented path from a node $v_0$ to a node $v_{r}$ is an ordered set of $r$ adjacent links $p=(e_1, e_2, \ldots, e_{r})$ such that $\nu_{e_1}=v_0$, $\kappa_{e_r}=v_{r}$, $v_s=\kappa_{e_{s}}=\nu_{e_{s+1}}$ for $1\le s\le r-1$, and no node is visited twice, i.e., $v_{l}\ne v_s$ for all $0\le l<s\le r$, except possibly for $v_0=v_r$, in which case the path is referred to as a cycle. Moreover, let $\ell_{e}$ be the length of the link $e \in \mc E $  and let $\ell^p=\sum_{e \in p} \ell_{e}$ be the length of the path $p$. 
%
A node $v_j$ is said to be reachable from another node $v_k$ if there exists at least a path from $v_k$ to $v_j$.

We hold the following assumptions on the multi-graph~$\mathcal{G}$ and on the agents that move along its links.
\begin{Assumption}\label{ass:main}~
	\begin{enumerate}
		\item\label{ass:main1} $\mathcal{G}$ contains no cycles.
		\item\label{ass:main2} $\mathcal{G}$ includes an origin node $o$, from which any node in $\mathcal{V}$ can be reached, and a destination node $d\ne o$, which is reachable from any node in $\mathcal{V}$, $o$~included.
		\item\label{ass:main3} Agents arrive at~$o$ in the morning  and desire to leave from~$d$ in the afternoon.
	\end{enumerate}
\end{Assumption}

We denote $\Gamma$ as the set of all the paths from $o$ to $d$. 
In particular, in this paper, we consider a  graph~$\mathcal{G}$ on which agents have only three possible paths to reach the destination~$d$ starting from the origin $o$ (see Figure~\ref{graphtopology}).
\begin{figure}[htpb]
	\centering
	\includegraphics [width=0.35\textwidth]{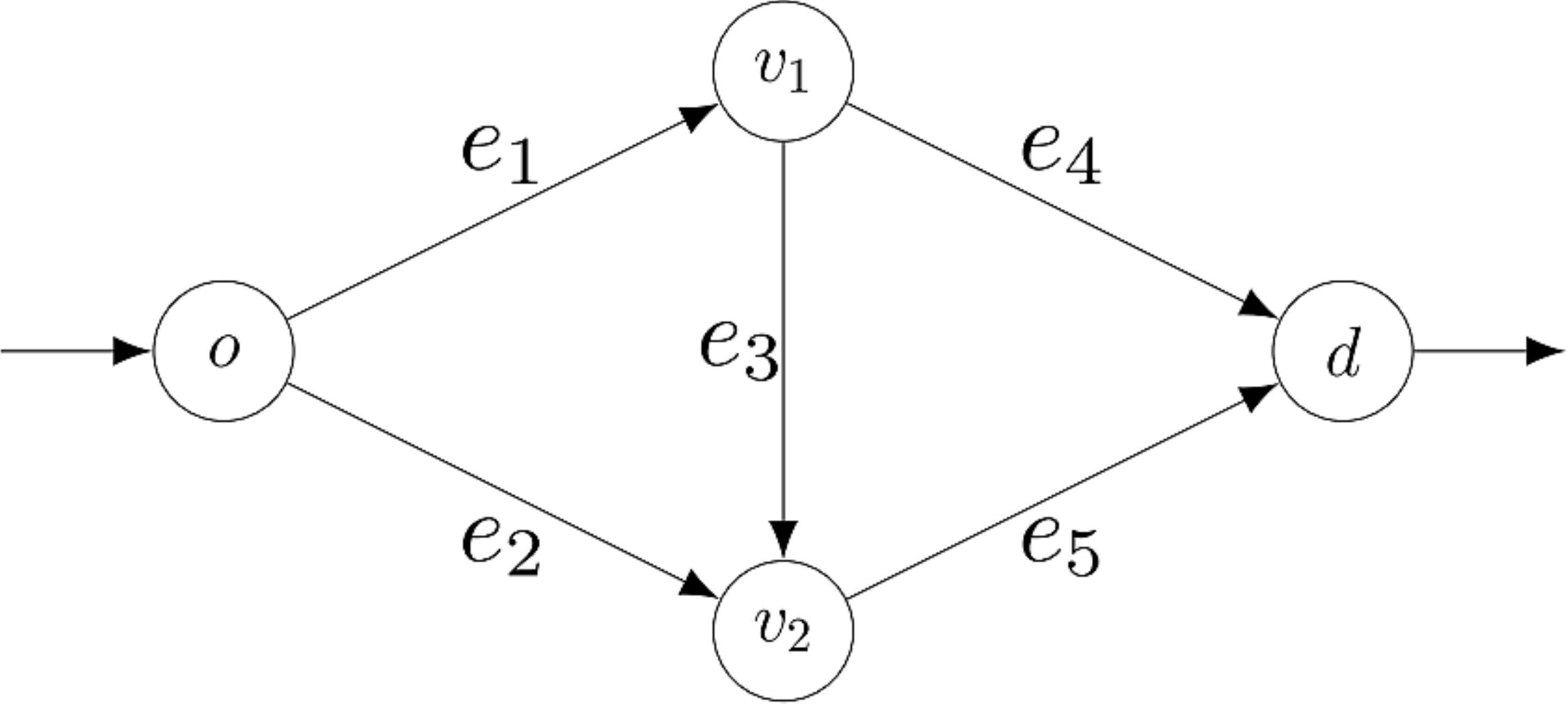}
	\caption{\label{graphtopology} The graph topology used in the paper.}
\end{figure}
In this case, the set of path is $\Gamma=\{p_1, p_2, p_3\}$, where $p_1=(e_1,e_4)$, $p_2=~(e_2, e_5)$, $p_3=(e_1, e_3, e_5)$. We denote the corresponding link-path incidence matrix by $A\in\{0,1\}^{|\mc E| \times |\Gamma|}$ with entries 
\begin{equation*}
A_{ep}=
\begin{cases}
1 & \text{if} \quad e \in p, \\
0  & \text{otherwise}.
\end{cases} 
\end{equation*}
For every link $e \in \mc E$ and time
instant $t \in [0, T]$, we denote the current mass and flow by $\rho_e(t)$ and $f_e(t)$, respectively, defined as
\begin{equation*}
\rho_e:[0, T]\to [0, \rho_{\max}], \quad f_e:[0, T]\to [0, C_e],
\end{equation*}
with $T>0$ the final horizon, i.e., the time by which every agent has to reach the destination $d$, and $C_e$ is the maximum flow capacity. Moreover let
\begin{equation}
\rho(t) := \{\rho_e(t): e \in \mc E\},\quad f(t) := \{f_e(t) : e \in \mc E \} 
\end{equation} 
be the vectors of masses and flows, respectively.
\subsection{Agents' dynamics and costs}

\begin{equation}  \label{eq:thetau}
\begin{cases}
\dot{\theta}_e(s)=u^e(s), &s\in ]t,T],\, \forall e \in \mc E,\\
\theta_e(t)=0, & \forall t \in [0, T],\, \forall e \in \mc E,
\end{cases}
\end{equation}
where $\theta_e(s) \in [0, \ell_e]$, being $\ell_e$ the length of the link. Using this space-coordinates, $\theta_e(s)=0$ means that the agent is in $\nu_e$, while $\theta_e(s)=\ell_e$ means that the agent is in $\kappa_e$ and hence he is inside the link $e$ as long as $0\leq\theta(s)\leq \ell_e$. Note that \eqref{eq:thetau} describes the evolution of a possible agent which is in $\nu_e$ at time $t$ independently of the fact that he is present at $\nu_e$ at that time. By definition, $\theta_e(s)$ and $u^e(s)$ are equal to $NaN$ when the agent is not on link $e$. Hereinafter, $NaN$ stands for {\it not a number}.

The control, $s\mapsto u^e(s)\in \mathbb{R}_{+}$, is  measurable and locally integrable, namely $u^e\in L^1_{loc}(t,T) \ \forall t$. For ease of notation, from now on, we call $\mc U_e$ the set of these kind of controls.
There is no loss of optimality in assuming $u^e\geq 0$ as we discuss later in Remark 1.
The cost to be minimized by every agent crossing a link $e$,
takes into account: i) the possible hassle of running in the link $e$ to reach $d$ on time; ii) the pain of being entrapped in highly congested link; 
iii) the disappointment of not being able to reach $d$ by the final horizon~$T$.
Such a cost can be analytically represented by 
\begin{align}\label{eqcost}
J_e(t, u^e)=&\int_t^T\chi_{\{0\leq\theta_e(s)\leq \ell_{e}\}}\left(\frac{(u^e(s))^2}{2}+\varphi_e(\rho_e(s))\right)\,ds\nonumber \\  + 
&\chi_{\{0\leq \theta_e(T)< \ell_e\}}\alpha\sum_{j\in p(e)} \ell_{j},
\end{align}
where $p(e)$ is the shortest path from $\nu_e$ to $d$, $\alpha>0$ is a constant parameter representing a cost per unit of length and $\chi$ is the characteristic function.
In~(\ref{eqcost}),
the quadratic term inside the integral stands for cost i) while the other term stands for the congestion cost ii) given by the function $\varphi_e:[0, \rho_{\max}]\to [0, +\infty[ $; the last addendum stands for cost iii).\\
We define, with a little abuse of terminology, as value functions the following:
\begin{equation}\label{Valuefunc1}
\begin{split}
V^e(t)= &\inf\limits_{\substack{u^e \in \mc U_e}}
\Big\{J_e(t, u^e) \\
& +\min\limits_{\substack {j\in \mc E : \nu_j=\kappa_e\\ e, j \in p,\, p \in \Gamma}}\{V^{j}(\tau(t, u^e))\}\Big\}\quad \forall \ e \in \mc E
\end{split}
\end{equation}
\begin{equation}\label{Valuefunc2}
V^0(t)=\min\limits_{\substack {e\in \mc E : \nu_e=o\\ e \in p,\, p \in \Gamma}}\{V^{e}(t)\}.
\end{equation}

In \eqref{Valuefunc1}, $\tau(t, u^e)$ is the time at which an agent entering link $e$ at time $t$ and choosing a control $u^e$ arrives in $\nu_j=\kappa_e$.\\
Note that the recursive definition of $V^e$ in \eqref{Valuefunc1} is not meaningless since the absence of oriented cycles in the network $\mc G$ prevents self-referring, that is it cannot occur that we define $V^e$ in terms of itself.
Also $V^0$ can be seen as associated to a fictitious link $e_0$ with null length, such that $\nu_{e_0}=\kappa_{e_0}=o$, through which the flow enters into the network.
Finally, observe that when the distributional evolution~$t\to \rho_e(t)$, i.e. the mass of the agents on the link $e$, is initially given then $V^e$ does not
depend on $\rho_e$.

In this paper, we assume that the physical traffic flow $f$ consist of indistinguishable
homogeneous agents which enter in the
network through the origin node, travel through it on the different paths and finally exit from the network through the
destination node. The relative appeal of the different paths to
the agents is modelled by a time-varying nonnegative vector $z(t)$ in the simplex 
\begin{equation}\label{def:Slambda}
\mc{S}_{\lambda(t)}=\left\{z\in\RR_{+}^{\vert\Gamma\vert}:\,1'z(t)=\lambda(t)\right\},
\end{equation}
where $\lambda(t) : [0, T] \to [0, \rho_{\max}]$ is the throughput, i.e., the total flow that goes through the network. We refer to the vector $z(t)$ as the current \textit{aggregate path preference} and let 	
\begin{equation}\label{flowzeta}
y^z(t)=Az(t)
\end{equation}
be the flow vector associated to it. 
The vector $z(t)$ is updated
as agents access global information about the current
congestion status of the whole network (that is embodied
by the mass vector $\rho(t)$) and it is also influenced by the 
vector $u(t)=\{u^e(t)\neq NaN: e \in \mc E\}$. Hereinafter, $\mc U$ is the set of these vectors.  Specifically, the cost perceived by each agent, traversing a link $e\in\mathcal E$, is given by \eqref{eqcost}.
The cost that an agent expects to incur along a path $p$ is
\begin{equation*}
J^p(t,u)=\sum_{e \in \mc E} A_{ep}J_e(t_e^p,u^e) 
\end{equation*}
where $t_e^p(t)$, when $e \in p$, is the time instant in which an agent, arriving in $t$ in the origin $o$ and following the path $p$, reaches $\nu_e$. In particular $t_e^p(t)=t$ when $\nu_e=o$ with $e \in p$. Clearly, $t_e^p$ depends on the  controls $u^j$ for each link $j \in \mc E$ that precedes the link $e$ along $p$. Arbitrarily, we also set $t_e^p(t)=t$ when $e \notin p$.
We denote with
\begin{equation*}
J(t,u)=\{J^p(t,u): p \in \Gamma\}
\end{equation*}
the vector of costs on all the paths $p \in \Gamma$. Then, we  assume that the path preferences are updated at some rate $\eta>0$, according to a noisy best response dynamics 
\begin{equation}\label{evolpi}
\dot{z}(t)=\eta(F^{(\beta)}(t, u)+Q^{(\beta)}(t, u)-z(t)),\ \ z(0)=z_0,
\end{equation}
which takes into accounts both the value of $\lambda(t)$ and $\dot{\lambda}(t)$.
Generally in the literature (see e.g., \cite{Como}, \cite{ComoMagg}) the throughput $\lambda$ is constant, hence in the formulation of the noisy best response dynamics $\dot{z}$ only the function $F^{(\beta)}$ appears. Here, as $\lambda$ varies over time, we need to introduce also the function $Q^{(\beta)}$ so that $z$ satisfies the constraint \eqref{def:Slambda}.
In particular, 
 for every fixed noise parameter $\beta>0$ the function $$F^{(\beta)}: [0, T]\times \mc U \to \RR^{\vert\Gamma\vert}_+$$ is the perturbed best response defined as follows:
\begin{equation}\label{bestresponse}
F^{(\beta)}(t, u)
=\frac{\lambda(t)\exp(-\beta(J(t,u)))}{1'\exp(-\beta(J(t,u)))},
\end{equation}
which provides an idealized description of the behavior of agents whose decisions are based on inexact information about the state of the network. Moreover it is continuous on $[0,T]\ \ \forall u \in \mc U$ fixed.
The function $Q^{(\beta)}:[0, T]\times \mc U \to \RR^{|\Gamma|}_+$ is defined as
\vspace{-0.3cm}
\begin{equation}\label{funzlambdapunto}Q^{(\beta)}(t, u)=\frac{\dot{\lambda}(t)\exp(-\beta(J(t,u)))}{1'\exp(-\beta(J(t,u)))}.
\end{equation}
We assume that \eqref{funzlambdapunto} has the same structure of \eqref{bestresponse} and it shows how the agents' preferences $z$ are updated taking into account not only the value of $\lambda(t)$ but also its time variation. For this reason we consider $\dot{\lambda}(t)$ and we assign to it the same weight of $\lambda(t)$ selecting the same fixed noise parameter $\beta$.
We now describe the \textit{local decision function} $G:\mc {S}_{\lambda(t)}\to\RR_+^{\vert\mc E\vert}$ characterizing
the fractions of agents choosing each outgoing link $e$ when traversing a non destination node $v$. It is given by
\begin{equation}\label{localchoice}
G_{e}(z)=
\begin{cases}
\displaystyle\frac{y_e^{z}}{\displaystyle\sum_{j \in \mathcal{E}: \nu_j=\nu_e}y_j^z} & \text{if}\quad{\displaystyle\sum_{j \in \mathcal{E}: \nu_j=\nu_e}y_j^z}>0\\
\,
\displaystyle\frac{1}{\vert\{j \in \mathcal{E}: \nu_j=\nu_e\}\vert}&  \text{if}\quad {\displaystyle\sum_{j \in \mathcal{E}: \nu_j=\nu_e}y_j^z}=0\,,
\end{cases}
\end{equation}
for each outgoing link $e$ in $\mc E$ and
 it is continuously differentiable on $ \mc {S}_{\lambda(t)}$. Equations  \eqref{localchoice} state that, at every node  $v \in \mc V$ the outflow is split proportionally to the flow vector $y^z$, if there is flow $y^z$ passing through node $v$, otherwise, the outflow is split uniformly among the immediately downstream links.  


Now, for every non-destination node $v$ and outgoing link $e \in \mc E$
conservation of mass implies that
\begin{equation}\label{sistemaccoppiato}
\dot{\rho}(t)=H(f(t), z(t))\,,\qquad \rho(0)=\rho_0
\end{equation}
where $H: \prod_{e\in \mc E}[0,C_e] \times \mc S_{\lambda}\to\RR^{|\mc E|}$ is defined as
\begin{equation}\label{H}
H_e(f, z):= G_{e}(z)\bigg(\lambda\delta_{\nu_e}^{(o)}+\sum_{j: \kappa_j=\nu_e}f_j\bigg)-f_e\,,\ \forall\ e\in\mc E,
\end{equation}
in which $\lambda(t)\delta_{\nu_e}^{(o)}$ accounts for the exogenous
inflow in the origin node $o$
and each component $f_e$ is 
\begin{equation}\label{flusso} f_e(t)=\rho_e(t) u^e(\rho)/ \ell_e,\end{equation} i.e., the flow on the link $e$ estimated by an agent entering in the very link at time $t$.\\
Next we introduce the hypotheses that we will use to prove the existence of a MF equilibrium:

 (H1)  the throughput $\lambda:[0,T]\to[0,+\infty[$ is $C^1([0,T])$ and bounded;

 (H2) 
	$\rho(0)=0$;
	
 (H3) $\varphi_e: [0, \rho_{\max}]\to [0, +\infty[$ are bounded, Lipschitz continuous and do not depend on the state variable $\theta_e$.\\ 
 
 
 \vspace{-0.3cm}
Note that $\rho(0)=0$ in (H2) means that no one is around the city at $t=0$, while (H3) implies that all agents in the same link at the same instant equally suffer the same congestion.
\begin{Remark}\label{ccp}
	Note that (H1) (H3) also imply that the control choice made by agents at the beginning of each link
	does not change as long as the agent remains in the same link, and it is constant in time. This is a feasible situation whenever the movement of an agent on a link cannot be interfered by the movements of agents that enter the same link after him. Note also that the same hypotheses imply that is  not convenient to go back along a link $e$, which in turn implies that the optimal control $u^e$ is always non-negative.
\end{Remark}

\subsection{Value Functions}
In order to define the MF equilibrium instead of coupling the two standard MF equations, i.e.,  Hamilton-Jacobi-Bellman equations and mass conservation ones \eqref{sistemaccoppiato}, with the preferences equations \eqref{evolpi} we write 
conditions equivalent 
in terms of the value functions. 
An agent standing at $\nu_{e}$, and hence at $\theta_e=0$, for $e \in \{e_4, e_5\}$ at time $t$ has two
possible choices: either staying at $\nu_{e}$ indefinitely or moving to reach $\kappa_{e}=d$ exactly at time $T$ (it is not optimal to reach $d$ before $T$ and
wait there for a positive time length, see, e.g., \cite{bagpes}). Accordingly, the controls~are
\begin{equation}\label{oc2}
u^{e}_1=0, \qquad u^{e}_2= \frac{\ell_{e}}{T-t}.
\end{equation} 
Hence, given the cost functional (\ref{eqcost}), we derive
\begin{equation}\label{eq:V1}
V^{e}(t)=\min\left\{ \alpha\ell_{e},\,\frac{1}{2}\frac{(\ell_{e})^{2}}{T-t}\right\} +\int_{t}^{T}\varphi_{e}\,ds 
\end{equation}
(note that we do not display the argument $\rho_{e}$ of $\varphi_{e}$ for $e \in \{e_4, e_5\}$. We use this convention also for the other $\varphi_e$ in the formulas below).
An agent standing at $\nu_{e_3}$ at time $t\in[0,T]$, has two possible
choices (meaning, the optimal behaviour may only be one of the following
two): staying in $\nu_{e_3}$ or moving to reach $\kappa_{e_3}$ at $\tau \in ]t, T]$. 
The  controls among which the agent chooses  and the corresponding value function are then, respectively
\begin{equation}\label{oc1}
u^{e_3}_1=0,\ \ u^{e_3}_2=\frac{\ell_{e_3}}{\tau-t},
\end{equation}
\vspace{-0.5cm}
\begin{align}\label{eq:V2}
V^{e_3}(t)= 
&\min\left\{ \alpha\left(\ell_{e_3}+\ell_{e_5}\right)+\int_{t}^{T }\varphi_{e_3}\,ds,\right.\\ &\left.\inf_{\tau\in ]t,T]}  \left\{\frac{1}{2}\frac{(\ell_{e_3})^{2}}{\tau-t}+\int_{t}^{\tau }\varphi_{e_3}\,ds+V^{e_5}(\tau)\right\}\right\}. \nonumber
\end{align}
An agent standing at $\nu_{e_1}$ at time $t$ may choose: staying in $\nu_{e_1}$ or to reach $\kappa_{e_1}$ at a certain $\tau \in ] t,T]$. 
Hence, the control is chosen among
\begin{equation}\label{oc4}
u^{e_1}_1=0,  \  u^{e_1}_2=  \frac{\ell_{e_1}}{\tau-t}.
\end{equation}
Consistently, one has
\begin{small}
\begin{align}\label{eq:V11}
&V^{e_1}(t)=\min\left\{ \alpha\min\left\{\ell_{e_1}+\ell_{e_4},\ell_{e_1}+\ell_{e_3}+\ell_{e_5}\right\}+\int_{t}^{T}\varphi_{e_1}\,ds,\right.\nonumber\\ 
& \left.\inf_{\tau\in ]t,T]}\left\{\frac{1}{2}\frac{(\ell_{e_1})^{2}}{\tau -t}+\int_{t}^{\tau }\varphi_{e_1}\,ds+ \min\{V^{e_3}( \tau),V^{e_4}(\tau)\}\right\}\right\}
\end{align}
\end{small}
Analogous arguments hold for computing $V^{e_2}(t)$ when an agent is standing at $\nu_{e_2}$, indeed in that case the controls are
\begin{equation}\label{oc3}
u_1^{e_2}=0, \quad u_2^{e_2}=\frac{\ell_{e_2}}{\tau-t}.
\end{equation}
Hence the value function is
\begin{align}\label{eq:V22}
V^{e_2}(t)&=\min\Bigg\{\alpha(\ell_{e_2}+\ell_{e_5})+\int_{t}^{T}\varphi_{e_2}\,ds,\\
& \inf_{\tau\in ]t,T]}\left\{\frac{1}{2}\frac{(\ell_{e_2})^{2}}{\tau -t}+\int_{t}^{\tau }\varphi_{e_2}\,ds+ V^{e_5}(\tau)\}\right\}\Bigg\}.\nonumber
\end{align}
Finally,
\begin{equation}\label{eq:V23}
V^0(t)=\min\{V^{e_1}(t), V^{e_2}(t)\}.
\end{equation}

Note that, the  optimal controls described in (\ref{oc1}), (\ref{oc4}), (\ref{oc3}) 
are detected  along with the  arrival time $\tau $ along the minimization process carried on in  (\ref{eq:V2}), (\ref{eq:V11}), (\ref{eq:V22}).
Also, when $\rho$ is given, the construction of the optimal controls is  performed  backwardly, starting from the problem (\ref{eq:V1}).
Finally, observe that $V^0(t)$ in \eqref{eq:V23} is determined without the necessity of computing any optimal control on the fictitious link $e_0$.
We summarize the previous discussion as follows.
\vspace{1mm}
\begin{Theorem} \label{th:valuefun} Suppose that $\rho$ is given and that (H1)--(H3) hold. Then, every value function $V^e : [0, \ell_e] \times [0,T] \to \mathbb{R}$, at $0$ i.e, at the beginning of each link, for all $e \in \cal E$ and for all $t\in[0,T]$,
	is determined through (\ref{eq:V1})-(\ref{eq:V22}).
	In addition, $V^e$
	is Lipschitz continuous with respect to time, with its Lipschitz constant independent of $\rho_e$.
\end{Theorem}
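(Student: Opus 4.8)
The plan is to prove the two assertions in turn, both by backward induction along the links, the induction being made legitimate by the absence of oriented cycles (Assumption~\ref{ass:main}.\ref{ass:main1}). Acyclicity induces a reverse topological ordering of $\mc E$ in which the head $\kappa_e$ of every link is strictly closer to $d$ than its tail; unwinding the recursion \eqref{Valuefunc1} along this ordering, starting from the links incident to $d$, produces no self-reference, so each $V^e$ is expressed through value functions already computed. I would first use this to derive the explicit formulas \eqref{eq:V1}--\eqref{eq:V22}, and then propagate a time-Lipschitz bound from the downstream links to the upstream ones.

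To obtain the formulas I would reduce the infinite-dimensional control problem on each link to a one-dimensional one. Fix $e$ and suppose the agent enters at $\theta_e=0$ at time $t$ and first reaches $\theta_e=\ell_e$ at a prescribed $\tau\in\,]t,T]$. Since by (H3) the term $\varphi_e$ does not depend on the position $\theta_e$, the congestion cost $\int_t^\tau\varphi_e\,ds$ is fixed once $\tau$ is fixed, so only $\int_t^\tau\tfrac{(u^e)^2}{2}\,ds$ must be minimized, subject to $\int_t^\tau u^e\,ds=\ell_e$ and $u^e\ge 0$ (the sign restriction being harmless by Remark~\ref{ccp}). By the Cauchy--Schwarz (equivalently Jensen) inequality the minimum is the constant control $u^e=\ell_e/(\tau-t)$, of value $\tfrac12\,\ell_e^2/(\tau-t)$; the same computation shows that any ``wait-then-move'' policy is dominated by the immediate constant crossing with equal arrival time. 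Hence the agent at $\nu_e$ faces only the binary alternative ``stay forever'' versus ``cross and arrive at some $\tau$''. Evaluating \eqref{eqcost} on the ``stay'' policy, which triggers the terminal penalty $\alpha\sum_{j\in p(e)}\ell_j$ and the congestion cost up to $T$, and on the optimal crossing, using that reaching $d$ strictly before $T$ is never optimal (as recalled from~\cite{bagpes}) and inserting the downstream value $V^{\kappa_e}(\tau)$, reproduces exactly \eqref{eq:V1}--\eqref{eq:V22}.

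For the Lipschitz bound the key remark is that the outer $\min$ with the constant ``stay'' term neutralizes the apparent singularity of $\tau\mapsto\ell_e^2/(\tau-t)$. In the base case $e\in\{e_4,e_5\}$, formula \eqref{eq:V1} is the sum of $\int_t^T\varphi_e\,ds$, Lipschitz with constant $\|\varphi_e\|_\infty$ by (H3), and of $\min\{\alpha\ell_e,\tfrac12\ell_e^2/(T-t)\}$. The two branches coincide at $T-t=\ell_e/(2\alpha)$; on the region where the hyperbola is selected one has $T-t\ge\ell_e/(2\alpha)$, so its time-derivative $\tfrac12\ell_e^2/(T-t)^2$ is bounded by $2\alpha^2$, while on the complementary region the value is the constant $\alpha\ell_e$. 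Thus \eqref{eq:V1} is Lipschitz with constant $\max\{\|\varphi_e\|_\infty,2\alpha^2\}$, which depends only on $\alpha$ and on the uniform bound on $\varphi_e$, and not on the particular profile $\rho_e$.

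For the inductive step at a generic link the ``stay'' branch is Lipschitz as above, so it remains to control the ``move'' branch $M(t)=\inf_{\tau}\{\tfrac12\ell_e^2/(\tau-t)+\int_t^\tau\varphi_e\,ds+V^{\kappa_e}(\tau)\}$. Here I would first establish a uniform lower bound on the crossing time: whenever $M$ does not exceed the bounded stay branch, every near-minimizing $\tau$ satisfies $\tau-t\ge\delta$ for some $\delta>0$ depending only on $\ell_e$, $\alpha$ and the uniform bounds on $\varphi$ and on $V^{\kappa_e}$ (all $\rho$-independent), since otherwise the term $\tfrac12\ell_e^2/(\tau-t)$ alone would already exceed the stay cost; this confines the active region of $M$ to $\{t\le T-\delta\}$. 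On that region I would bound $M(t_1)-M(t_2)$ in both directions by translating the optimal arrival time of one instant by the same amount as the two base points, so that the denominator $\tau-t$ is preserved and the estimate reduces to the Lipschitz congestion integral and the inductive Lipschitz bound on $V^{\kappa_e}$. Splitting $[0,T]$ at the threshold $T-\delta$, on which side the stay branch takes over, then yields the claim with a $\rho$-independent constant. \emph{The main obstacle} I expect is precisely this last step: rigorously ruling out the blow-up of $M$ as $\tau\to t^+$ and as $t\to T^-$, i.e.\ proving the uniform gap $\delta$ and implementing the arrival-time shift so that patching the two branches through the $\min$ neither degrades the Lipschitz constant nor reintroduces a hidden dependence on $\rho_e$.
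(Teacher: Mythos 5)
Your proposal is correct and follows essentially the same route as the paper: the paper proves the theorem precisely by the discussion preceding it, namely the reduction of each link problem to the binary alternative ``stay'' versus ``cross arriving at $\tau$'' (with constant control optimal by Jensen/Cauchy--Schwarz, legitimate since by (H3) the congestion term is insensitive to the position $\theta_e$), computed backwardly from the links incident to $d$ thanks to acyclicity; this is exactly your first two paragraphs and it reproduces \eqref{eq:V1}--\eqref{eq:V22}. Where you go beyond the paper is the Lipschitz claim: the paper merely asserts it (the proofs are declared to be sketches), whereas you give the base-case computation (the outer $\min$ caps the derivative of the hyperbola branch at $2\alpha^2$) and a workable inductive scheme --- the uniform gap $\tau-t\ge\delta$ obtained by comparing the kinetic term against the bounded ``stay'' cost, followed by the arrival-time translation that preserves $\tau-t$ --- both of which yield constants depending only on $\alpha$, $T$, the link lengths and $\sup\varphi_e$, hence independent of $\rho_e$ as required. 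The obstacle you flag (shifted arrival time possibly exceeding $T$) is handled by the standard patch: when $\tau_1+(t_2-t_1)>T$ take $\tau_2=T$ and absorb the kinetic increase using $\tau_1-t_1\ge\delta$, treating increments larger than $\delta/2$ by boundedness of the value functions; so your sketch closes with no hidden dependence on $\rho_e$.
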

\vspace{1mm}
\begin{Remark}\label{rholip}
	Notice that, since the optimal controls are necessarily equi-bounded by a
	constant depending only on the parameters of the problem and since from (H1) follows that the inflow $\lambda(t)\delta_{\nu_e}^{(o)}$ is bounded and continuous,	
	by results on the mass conservation equations ((see, e.g., \cite{car}), follow that all $\rho_e$ are Lipschitz continuous, with Lipschitz constant $\tilde L$ depending only on the parameters of the problem. 
\end{Remark}

\section{EXISTENCE OF A MEAN FIELD EQUILIBRIUM}
\label{sec:3} {In this section we give a proof of the existence of a MF equilibrium.}
Let $L(w)$ be the Lipschitz constant of a function~$w$. As space to search for a fixed point, we choose
\begin{equation}\label{eq:X}
X=\left\{w:[0,T]\to[0,\rho_{\max}]:  {L(w)\le\tilde L},\, |w|\le \rho_{\max} \right\}^5,
\end{equation}
the Cartesian product five times of the space of  Lipschitzian functions with Lipschitz constant not greater than $\tilde L$  and overall bounded by $\rho_{\max}$, where $\tilde L$ is the constant introduced in Remark \ref{rholip}. 
Note that $X$ is convex and compact with respect to the uniform topology.

Fixed the noisy parameter $\beta>0$, we then search for a fixed point of the multi-function 
$\psi:~X\to~X$,  with
$\rho\mapsto \rho^\prime \in\psi(\rho)$ where 
$\rho^\prime$ is obtained as follows: (i)  $\rho$ is inserted in  (\ref{eq:V1})--(\ref{eq:V22}),  the optimal control $u$ is derived; (ii)  $u$ is inserted in (\ref{evolpi}) and the path preference vector $z$ is obtained; (iii) $\rho^\prime$ is derived from \eqref{sistemaccoppiato} after the computation of \eqref{localchoice} and \eqref{flusso}:
\vspace{-0.5cm}
	\begin{figure}[h!]
	\centering
	\begin{tikzpicture}
	[scale=1.3,auto=left,every node/.style={circle,draw=black!90,scale=.5,fill=white,minimum width=.5cm}]
	\node [scale=2, auto=center,fill=none,draw=none] (n0) at (0,0){$\rho$};
	\node [scale=2, auto=center,fill=none,draw=none] (n1) at (1,0){$u$};
	\node [scale=2, auto=center,fill=none,draw=none] (n2) at (2.2,0){$J(t,u)$};
	\node [scale=2, auto=center,fill=none,draw=none] (n3) at (3.4,0){$z$};
	\node [scale=2, auto=center,fill=none,draw=none] (n4) at (4.7,0){$H(f,z)$};
	\node [scale=2, auto=center,fill=none,draw=none] (n5) at (6,0){$\rho'$};
	\node [scale=2, auto=center,fill=none,draw=none] (n6) at (2.9,-0.6){$f$};	
	\foreach \from/\to in
	{n0/n1,n1/n6,n1/n2,n2/n3,n3/n4,n4/n5,n6/n4}
	\draw [-latex, right] (\from) to (\to); 
	\end{tikzpicture} 
\end{figure}
\vspace{-0.5cm}
\begin{Remark} Note that in general $\psi$ is not single valued, that is $\psi(\rho)$ is a nonsigleton subset of $X$. Indeed the optimal control may not  be unique as, for any fixed $t$, the minimization procedure in (\ref{eq:V1})--(\ref{eq:V22}) may return more than one minimizer. In particular, in (\ref{eq:V1})--(\ref{eq:V22}) this  may happen even along a whole time interval. 
Hence, when a multiplicity situation occurs, $\psi$ can be built in many ways,  as many as the different optimal behaviors. Indeed, for example, if the controls in \eqref{oc4} are both optimal we get two different flows $f_{e_1}$ and $f'_{e_1}$ (see \eqref{flusso}) and hence, through \eqref{sistemaccoppiato}, two different densities $\rho_{e_1}$ and $\rho'_{e_1}$ of which we should consider the convexification. Note that the controls' multiplicity does not influence the computation of $z$ and hence $G(z)$, but only the flows $f$ used in \eqref{sistemaccoppiato} to get the fixed point of $\psi$. To bound the times at which such multiplicities appear, we will obtain $\psi $ and its fixed point $\widetilde\rho$ through a limiting procedure.
\end{Remark}
\medskip
Fixed $\beta>0$, let $\left\{ \psi _{\varepsilon }\right\} _{\varepsilon >0}$ be a sequence  of functions
approximating $\psi $ in a suitable sense, and $\rho_\varepsilon$ its corresponding fixed points. The single function 
$\psi _{\varepsilon }$ is obtained through~(i)--(iii) above, with
the difference that in (ii), rather than choosing optimal controls, one
chooses $\varepsilon -$optimal controls and, along time, an $\varepsilon-$\emph{optimal stream}. Accordingly the path preference vector will be computed once the values of that $\varepsilon-$\emph{optimal stream} are given. We divide the construction into two steps. 

\medskip
\noindent \emph{Step 1: $\varepsilon$-optimal streams.}
\begin{Definition} 
	Assume for each link $e=(\nu_e, \kappa_e) \in \mc E$ to be in $\theta_e=0$, i.e., at node $\nu_e$, and let $u^{e}_{j}$, $j\in\{1,2\}$ be the controls defined through (\ref{oc2}) (\ref{oc1}) (\ref{oc4}) (\ref{oc3}). 
	Consider also a partition  $\tau^{e}=\{t^n\}_n$ of the interval $[0,T]$, {and fix $\varepsilon>0$}. 
	Then ${u}^{e }_{\varepsilon }$ is an
	$\varepsilon -$optimal stream for the link $e$ associated to the partition $\tau^{e}$ if
	$${u}^{e}_{\varepsilon }(s)=u^{e}_{j_n}(s), \ \ s\in[ t^{n},t^{n+1}[ $$  where  $u^{e}_{j_n}$ is  optimal at $t^{n}$ and $\varepsilon$-optimal at all $s\in] t^{n},t^{n+1}[$, {that is, it realizes the minimum cost up to an error not greater than $\varepsilon$.}
\end{Definition}

Notice that an $\varepsilon$-optimal stream associated to a general partition $\tau$ may or may not exist, but it certainly does when the partition is refined enough.  Indeed, considering  the functions involved in  the minimization process in (\ref{eq:V1}), (\ref{eq:V2}), (\ref{eq:V11}), (\ref{eq:V22}),
if the minimum is realized up to the error $\varepsilon$, the minima 
are attained within  intervals of type $[t+h_\varepsilon,T],$ for some suitable $h_\varepsilon>0$,
so that the functions cited above are Lipschitz continuous.
Denote by $L$ the greater of Lipschitz constants of these functions. Then a control $u^{e}_j$ optimal at $t$ remain $\varepsilon$-optimal at least along $[t,t+ \varepsilon/ L[$.  
In addition, for every link $e$, there may exist more than one $\varepsilon$-partition, as optimal controls determined in $\theta_e=0\,, \forall\, e \in \mc E,$  may be multiple. Anyway, the number of $\varepsilon$-partitions is overall bounded by a number $M_\varepsilon$, as the optimal controls at every $\theta_e=0$ are at most two. This argument proves the following.
\begin{Lemma}\label{taueps}
	Fixed $\varepsilon>0$, set $N=\max\{n\in{\mathbb N}: n<(TL)/\varepsilon\}+1$.  Consider the partition $\tau_\varepsilon$ of $[0,T]$ such that:
	\begin{itemize}
		\item[$(i)$] $t^0_\varepsilon=0$; $t^N_\varepsilon=T$;
		\item[$(ii)$] $t^n_\varepsilon=n L/ \varepsilon$, for all $n\in\{1,\dots, {N-1}\}$.
	\end{itemize}
	Then the set of $\varepsilon$-optimal streams associated to $\tau_\varepsilon$ is nonempty and finite.
\end{Lemma}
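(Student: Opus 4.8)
The plan is to read this lemma as the precise, quantitative packaging of the two facts established in the discussion immediately preceding it, and to prove it by first checking that the prescribed partition has mesh small enough for optimality to propagate into $\varepsilon$-optimality across each subinterval, then constructing at least one stream subinterval by subinterval, and finally counting the available choices. I would begin by recording the two ingredients I am allowed to use: a control optimal at an entry time $t$ remains $\varepsilon$-optimal on the whole window $[t,t+\varepsilon/L)$ (this follows from the Lipschitz continuity, with constant $L$, of the functions minimized in (\ref{eq:V1})--(\ref{eq:V22}), valid once the minimizing arrival times are kept away from the singularity at $T$), and at every link origin $\theta_e=0$ there are at most two optimal controls, namely the staying control $u^e_1=0$ and a moving control $u^e_2$. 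With $N$ and the nodes $t^n_\varepsilon$ chosen so that the step equals $\varepsilon/L$, each subinterval $[t^n_\varepsilon,t^{n+1}_\varepsilon)$ is exactly the propagation window of its left endpoint (and the last, possibly shorter, subinterval still fits inside such a window).

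For nonemptiness, on each subinterval $[t^n_\varepsilon,t^{n+1}_\varepsilon)$ I would select a control optimal at the left endpoint $t^n_\varepsilon$ and extend it as the constant-in-time control of Remark~\ref{ccp} across the subinterval. Existence of such an optimal control is the one genuinely technical point: the staying option is always available, while for the moving option one must know that the infimum over the arrival time $\tau$ in (\ref{eq:V2}), (\ref{eq:V11}), (\ref{eq:V22}) is attained. I would obtain this from the fact that the map $\tau\mapsto \frac{1}{2}\,\ell_e^2/(\tau-t^n_\varepsilon)+\int_{t^n_\varepsilon}^{\tau}\varphi_e\,ds+V^{j}(\tau)$ (with $j$ the link following $e$ on the relevant path) is continuous on $(t^n_\varepsilon,T]$, diverges as $\tau\to (t^n_\varepsilon)^+$, and involves value functions that are Lipschitz in time by Theorem~\ref{th:valuefun}; hence the infimum is realized at some $\tau^\star$ bounded away from $t^n_\varepsilon$, which is precisely the $h_\varepsilon$ margin invoked in the discussion. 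Once a control optimal at $t^n_\varepsilon$ is fixed, the propagation fact makes it $\varepsilon$-optimal on all of $(t^n_\varepsilon,t^{n+1}_\varepsilon)$, so the concatenation over $n=0,\dots,N-1$ is a genuine $\varepsilon$-optimal stream associated to $\tau_\varepsilon$.

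For finiteness, I would simply count: the partition has the fixed finite number $N$ of subintervals, and on each of them the stream is pinned down by choosing one of the at most two controls optimal at the left endpoint. Hence for each link $e$ there are at most $2^N$ admissible streams, and since $\mc E$ is finite the entire set of $\varepsilon$-optimal streams associated to $\tau_\varepsilon$ is finite.

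I expect the main obstacle to be the attainment and well-definedness of ``optimal at $t^n_\varepsilon$'' near the terminal time: the $1/(\tau-t)$ blow-up makes the running-cost term singular, and one must argue that minimizers stay in $[t^n_\varepsilon+h_\varepsilon,T]$ so that the relevant functions are genuinely Lipschitz there with the constant $L$ that drives the mesh choice. A secondary bookkeeping point is to take the step of the partition equal to $\varepsilon/L$ (so that each subinterval coincides with a propagation window) consistently with the stated value of $N$; the constraints $t^0_\varepsilon=0$ and $t^N_\varepsilon=T$ then only require that the final, shorter subinterval have length at most $\varepsilon/L$, which holds by construction.
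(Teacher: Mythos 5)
Your proposal is correct and follows essentially the same route as the paper: the paper's proof is exactly the discussion preceding the lemma (Lipschitz continuity of the functions minimized in (\ref{eq:V1})--(\ref{eq:V22}) away from the $\tau\to t^+$ singularity, giving propagation of optimality at a partition point to $\varepsilon$-optimality over a window of length $\varepsilon/L$, combined with the fact that at most two controls are optimal at each $\theta_e=0$, which bounds the number of streams). Your additions --- the explicit attainment argument for the minimizing $\tau^\star$ via blow-up of $\ell_e^2/(2(\tau-t))$ and compactness, the $2^N$ count per link, and the observation that the partition step must be read as $\varepsilon/L$ (correcting the paper's evident typo $t^n_\varepsilon=nL/\varepsilon$) --- are consistent refinements of the same argument rather than a different approach.
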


\vspace{1mm}


Now, for a fixed $\varepsilon>0$, consider the partition $\tau_\varepsilon$ defined in Lemma \ref{taueps}. Consider then the vector
$u_\varepsilon=\{u_\varepsilon^e: e \in \mc E\}$ whose components are  $\varepsilon$-optimal streams associated to  $\tau_\varepsilon$ in $\theta_e=0$.
As consequence of Lemma~\ref{taueps} the $\varepsilon$-optimal vectors $u_\varepsilon$ are a finite number of elements.
Note that once $u_\varepsilon $ is fixed,  all agents make the same decision, in time. Indeed, given the value of the $\varepsilon$-stream on each subinterval of $\tau_\varepsilon$, each agent, in that subinterval, can evaluates the total cost that he expects to incur in each path $p \in \Gamma$ and consequently it will be possible to compute 
$z$ through \eqref{evolpi}. Hereinafter, we refer to this $z$ as $z_\varepsilon$, since its value depends on $u_\varepsilon$.
The possibility for agents to split into fractions among different  vectors  $u_\varepsilon$  (that is, splitting among multiple optimal controls on  instants of $\tau_\varepsilon$), 
is given by an $\varepsilon-$split function $\mu_\varepsilon$ so defined.
\begin{Definition}\label{def:split}Consider the partition $\tau_\varepsilon$. An \emph{$\varepsilon$-split function} is a vector $\mu_\varepsilon=(\mu_j^{(e)}: \,j=1,2, \, e \in \mc E)\in L^\infty(0,T)^{2\times \vert\cal E\vert}$ whose components $\mu^{(e)}_j$,
	  called \emph{split fractions}, are constant on subintervals induced by $\tau_\varepsilon$ and satisfy:
	\begin{itemize}
		\item[$(i)$]  $\mu^{(e)}_j(s)\ge 0,$ for all $s\in[0,T]$,
		and $\mu^{(e)}_j(t)=0$ if $u^{e}_j$ is not optimal at $(0,t)$ for $t\in\tau_\varepsilon$;
		\item[$(ii)$] $\sum_{j=1}^2\mu^{(e)}_j(s)=1$ for all $s\in[0,T]$.
	\end{itemize}
\end{Definition}
We can see $\mu_j^{(e)}$ as the fraction of agents who choose the control $u_j^e$ in the link $e \in \mc E$.

\vspace{1mm}
\emph{Step 2: Construction of $\psi_\varepsilon(\rho)$.}

Let $\beta>0$, $\varepsilon>0$ and $\rho=(\rho_{e_1},\rho_{e_2},\rho_{e_3},\rho_{e_4},\rho_{e_5})\in X$ be fixed. Let also $\tau_\varepsilon$ be the partition of $[0,T]$ described above. We now built the multifunction
{$\psi_\varepsilon(\rho)\subseteq X$} with compact and convex images and closed
graph, to which later we can apply Kakutani fixed point theorem. 

(a) We define $\tilde\psi_\varepsilon(\rho)\subseteq X$ as the finite set of vectors $\rho^{\prime }=~(\rho_{e_1}^{\prime},\rho_{e_2}^{\prime}, \rho_{e_3}^{\prime},\rho_{e_4}^{\prime},\rho_{e_5}^{\prime}) \in X$ constructed in the following
way. We consider an $\varepsilon-$optimal vector
$u_{\varepsilon}$, associated to $\tau_\varepsilon$ in $\theta_e=0$ for every $e \in \mc E$.
Then, we use $u_{\varepsilon}$ to determine the path preference vector $z_\varepsilon$.
Finally,  given the arrival flow of agents $\lambda(t)\delta^{(0)}$ and computed the flow $f$ associated to the initially given $\rho$ and to $u_\varepsilon$ \eqref{flusso}, we solve \eqref{sistemaccoppiato} 
whose solution is the total mass~$\rho^{\prime }$. We repeat the construction  for all possible (and finite, by Lemma \ref{taueps}) choices  $u_\varepsilon$ and call the set of all outcomes $\tilde\psi_\varepsilon(\rho)$ which is a finite set.

(b)	We define $\psi_\varepsilon(\rho)\subseteq X$ as follows. 
We consider an arbitrary $\varepsilon$-split function $\mu_\varepsilon$ and assume that at every point of $\tau_\varepsilon$ the outflow of agents $f_e$ from every link $e$ gets split in fractions $\mu_1^{(e)}f_e$, $\mu_2^{(e)}f_e$ among the links $e'$ immediately downstream the link $e$.
At the end of the process the output is $\rho^\prime$.
We define $\psi_\varepsilon(\rho)$ as the set of all $\rho^\prime$  generated by all possible choices of the $\varepsilon$-split function $\mu_\varepsilon$. Clearly $\psi_\varepsilon(\rho)\supset \tilde \psi_\varepsilon(\rho)$.

%
\begin{Lemma}\label{lem:2} The set $\psi_\varepsilon(\rho)$ is a {non-empty} convex and compact subset of $X$, {for any $\rho \in X$}.  
	Moreover, the map $\rho\mapsto \psi_\varepsilon(\rho)$ has closed graph and
	it has a fixed point $\rho_\varepsilon\in X$.
\end{Lemma}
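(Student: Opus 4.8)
The plan is to verify the hypotheses of Kakutani's theorem on the convex compact set $X$. The structural fact I would establish first is that $\psi_\varepsilon(\rho)$ coincides with the convex hull of the finite set $\tilde\psi_\varepsilon(\rho)$. Once an $\varepsilon$-optimal vector $u_\varepsilon$ is fixed, the costs $J(t,u_\varepsilon)$ and hence, through \eqref{bestresponse}--\eqref{funzlambdapunto}, the preference $z_\varepsilon$ are determined; by the preceding Remark this is insensitive to the control multiplicity, so the routing $G(z_\varepsilon)$ is fixed. Each link flow is a convex combination $f_e=\sum_{j}\mu^{(e)}_j f_e^{(j)}$ of the flows $f_e^{(j)}=\rho_e u_j^e/\ell_e$ attached to the at most two controls (see \eqref{flusso}), hence linear in the split function $\mu_\varepsilon$. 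Since $H$ in \eqref{H} is affine in $f$ for fixed $G(z_\varepsilon)$, the solution of the linear Cauchy problem \eqref{sistemaccoppiato} depends affinely on $\mu_\varepsilon$. The admissible $\mu_\varepsilon$ of Definition \ref{def:split} form a product of simplices whose vertices are the pure choices generating exactly $\tilde\psi_\varepsilon(\rho)$; an affine map carries this polytope onto the convex hull of the vertex images. Thus $\psi_\varepsilon(\rho)=\mathrm{conv}(\tilde\psi_\varepsilon(\rho))$, which is non-empty (Lemma \ref{taueps}), convex, and compact, and lies in $X$ because $X$ is convex and each generator $\rho'$ belongs to $X$ by Remark \ref{rholip}.

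For the closed graph I would take $\rho_n\to\rho$ in the uniform topology of $X$ and $\rho'_n\in\psi_\varepsilon(\rho_n)$ with $\rho'_n\to\rho'$, and prove $\rho'\in\psi_\varepsilon(\rho)$. The first ingredient is the continuous dependence on $\rho$ of the value functions defined by \eqref{eq:V1}--\eqref{eq:V22}: since $\varphi_e$ is Lipschitz by (H3), the integrals $\int\varphi_e(\rho_{e,n})\,ds$ converge uniformly and the nested minima pass to the limit, so $V^e(\cdot;\rho_n)\to V^e(\cdot;\rho)$ along with the optimal arrival times and the induced controls $u_2^e=\ell_e/(\tau-t)$. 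Continuity of $F^{(\beta)}$ and $Q^{(\beta)}$ then gives $z_{\varepsilon,n}\to z_\varepsilon$ and $G(z_{\varepsilon,n})\to G(z_\varepsilon)$.

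The delicate step is the limit of the split functions. Because the partition $\tau_\varepsilon$ is fixed by Lemma \ref{taueps}, every $\mu_{\varepsilon,n}$ belongs to the same finite-dimensional compact polytope, so up to a subsequence $\mu_{\varepsilon,n}\to\mu_\varepsilon$. I would then check that $\mu_\varepsilon$ is admissible for $\rho$: the only binding requirement is $\mu^{(e)}_j(t)=0$ when $u_j^e$ is not optimal at $t$ for $\rho$. If such a control is strictly suboptimal for $\rho$, its cost gap is strictly positive and, by the continuity just recorded, stays positive for all large $n$; hence $\mu^{(e)}_{j,n}(t)=0$ and the limit vanishes. The remaining constraints (non-negativity, unit sum, piecewise constancy) are closed. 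Feeding $\mu_\varepsilon$ and the limit data $G(z_\varepsilon)$ into the linear system \eqref{sistemaccoppiato}, continuous dependence on the data yields that the $\rho'$ produced by $\mu_\varepsilon$ for $\rho$ equals $\lim\rho'_n$, so $\rho'\in\psi_\varepsilon(\rho)$.

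Having shown that $\psi_\varepsilon$ has non-empty convex compact values and closed graph on the convex compact set $X$, Kakutani's fixed point theorem provides a fixed point $\rho_\varepsilon\in\psi_\varepsilon(\rho_\varepsilon)$. I expect the main obstacle to be the admissibility of the limit split function in the closed-graph step: the moving controls $u_2^e=\ell_e/(\tau-t)$ depend on optimal arrival times $\tau$ that themselves vary with $\rho$, so the optimal-control correspondence is a tie-breaking, set-valued object. The argument ultimately rests on its upper semicontinuity, equivalently on the persistence of strict suboptimality under uniform perturbation of $\rho$, which follows from the Lipschitz estimates of Theorem \ref{th:valuefun} together with (H3).
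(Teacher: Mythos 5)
Your proposal follows essentially the same route as the paper's own (very compressed) proof: the paper likewise observes that $\psi_\varepsilon(\rho)$ is non-empty by construction and compact, identifies it with the convex hull of the finite set $\tilde\psi_\varepsilon(\rho)$ (hence convex, with finitely many extreme points), obtains the closed graph from the regularity assumptions (H1), (H3) by "reasoning as in \cite{bafama} but taking into account $z$", and concludes with Kakutani. Your write-up supplies precisely the details the paper delegates to \cite{bafama} — continuity of the value functions \eqref{eq:V1}--\eqref{eq:V22} in $\rho$, convergence of $z$ and $G(z)$, compactness of the split-function polytope, and persistence of strict suboptimality for the limit split function — and these are consistent with the intended argument.

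One caveat is worth fixing. Your justification of the convex-hull identity leans on the claim that the routing $G(z_\varepsilon)$ is common to all $\varepsilon$-optimal vectors, citing the Remark on control multiplicity. That Remark's reasoning applies to exact optima, which by definition have equal cost; the paper's construction (a) instead attaches to each $\varepsilon$-stream its own preference vector ("we refer to this $z$ as $z_\varepsilon$, since its value depends on $u_\varepsilon$"), and two distinct streams, both exactly optimal at the partition points, can have costs differing by up to $\varepsilon$ inside the subintervals of $\tau_\varepsilon$, hence distinct $z_\varepsilon$ and distinct routings. The identity $\psi_\varepsilon(\rho)=\mathrm{conv}\bigl(\tilde\psi_\varepsilon(\rho)\bigr)$ nevertheless holds, but the clean argument is superposition rather than a fixed routing: the flow \eqref{flusso} is computed from the given $\rho$, not from the unknown $\rho'$, so the right-hand side of \eqref{sistemaccoppiato} does not involve $\rho'$ and $\rho'$ is simply the integral of $H(f,z)$; splitting the population into fractions that follow the different streams (each carrying its own $z_\varepsilon$ and flows) then produces exactly the corresponding convex combination of the pure outcomes. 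With that repair, your affine-dependence-on-$\mu_\varepsilon$ argument, and the rest of the proof, goes through.
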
 
\begin{proof} We preliminarily observe that the set $\psi_\varepsilon(\rho)$ is non-empty by construction. In addition, it is also closed and, hence, it is compact since $X$ is compact. Moreover, it is possible to prove that $\psi_\varepsilon(\rho)$ is the convex hull of  $\tilde\psi_\varepsilon(\rho)$ which is the set of all $\rho$ whose corresponding $\mu_\varepsilon$ are obtained through all possible $\varepsilon-$streams.
	Observe that the set $\tilde\psi_\varepsilon(\rho)$ is finite and includes at most $M_\varepsilon$ elements. Hence the set $\psi_\varepsilon(\rho)$ has a finite number of extremal points. This fact, together with the regularity assumptions (H1) and (H3), implies, reasoning as in \cite{bafama} but taking into account $z$, that the multifunction $\rho\mapsto\psi_\varepsilon(\rho)$ has closed graph. Hence, by the Kakutani theorem, there exists a fixed point $\rho_\varepsilon\in\psi_\varepsilon(\rho_\varepsilon)$.		
	%
	%
	%
\end{proof}
Hereinafter, we denote by $\rho_\varepsilon$ a fixed point for $\psi_{\varepsilon}(\rho)$, i.e., a total mass that satisfies $\rho_\varepsilon \in \psi_{\varepsilon}(\rho_\varepsilon)$.

Before stating the existence of a MF equilibrium, 
we introduce the following definition that help restrict the equilibrium concept to the purpose of our problem.
\begin{Definition}\label{def:MFGE}
	Let $\psi$ and $\psi_\varepsilon$ be the functions described at the beginning of Section \ref{sec:3} with $\beta>0$ fixed.
	\begin{itemize}	
		\item	A \textit{$\varepsilon$-MF equilibrium} is a total mass~$\rho_\varepsilon \in X$ that satisfies $\rho_\varepsilon \in \psi_{\varepsilon}(\rho_\varepsilon)$. 
		\item	A \textit{MF equilibrium} is a total mass $\rho\in X$ that satisfies $\rho \in \psi(\rho)$.
	\end{itemize}	
\end{Definition}
Note that  $\rho \in \psi(\rho)$ implies 
that $\rho$ induces a set of optimal controls as in~(\ref{oc2}),~(\ref{oc1}),~(\ref{oc4}),~(\ref{oc3}) used both to compute the corresponding path preference vector $z$ and to define the fractions of flows according to a split function $\mu$. Such a function is given as in Definition \ref{def:split} but with the difference that $\mu$ is not linked to any $\varepsilon$-partition, and its components are not necessarily piecewise
constant. Finally, using \eqref{sistemaccoppiato} we get again $\rho$.
\begin{Theorem}\label{th:MFE}
	Assume  (H1)--(H3). Then there exists a MF equilibrium.
\end{Theorem}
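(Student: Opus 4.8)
The plan is to realize the MF equilibrium $\widetilde\rho$ as the uniform limit, along a vanishing sequence $\varepsilon_k\to 0$, of the $\varepsilon$-MF equilibria $\rho_\varepsilon\in\psi_\varepsilon(\rho_\varepsilon)$ provided by Lemma~\ref{lem:2}, and then to verify that this limit is a genuine fixed point of $\psi$. Since every $\rho_\varepsilon$ belongs to $X$, which is compact in the uniform topology (it is a finite product of uniformly Lipschitz and uniformly bounded families, so Arzel\`a--Ascoli applies), a subsequence $\rho_{\varepsilon_k}$ converges uniformly to some $\widetilde\rho\in X$. It then remains to pass to the limit in the three steps (i)--(iii) defining $\psi_\varepsilon$ and to identify the outcome with the construction of $\psi$ from Section~\ref{sec:3}.

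Next I would pass to the limit in the value-function and control step. By Theorem~\ref{th:valuefun} each $V^e$ depends on $\rho$ only through the integral terms $\int\varphi_e\,ds$ and is Lipschitz in time with constant independent of $\rho_e$; since (H3) makes every $\varphi_e$ Lipschitz, the value functions and their minimizers depend continuously on $\rho$ in the uniform topology. As the mesh of $\tau_{\varepsilon_k}$ and $\varepsilon_k$ both tend to $0$, an $\varepsilon_k$-optimal stream for $\rho_{\varepsilon_k}$ converges, at every time outside the finite set of switching instants determined by the structure of (\ref{eq:V1})--(\ref{eq:V22}), to a control optimal for $\widetilde\rho$. Hence the path costs $J(\cdot,u_{\varepsilon_k})$ converge, and because $F^{(\beta)}$ and $Q^{(\beta)}$ are continuous, the continuous dependence of solutions of the ODE~(\ref{evolpi}) yields $z_{\varepsilon_k}\to z$, the aggregate path preference generated by $\widetilde\rho$.

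Finally I would handle the split functions and the mass-conservation step and conclude. Each $\mu_{\varepsilon_k}$ is bounded in $L^\infty(0,T)^{2\times|\mc E|}$, so along a further subsequence $\mu_{\varepsilon_k}\rightharpoonup\mu$ weakly-$*$. The equality constraint~(ii) of Definition~\ref{def:split} is affine and thus survives the weak-$*$ limit, while the support constraint~(i) must be recovered by showing that the set of times on which an $\varepsilon_k$-optimal control is used but is not optimal for $\widetilde\rho$ has vanishing Lebesgue measure; the resulting $\mu$ is then an admissible (no longer piecewise-constant, not tied to any partition) split function for $\widetilde\rho$, exactly as required after Definition~\ref{def:MFGE}. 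Since the flow $f$ in~(\ref{flusso}) enters $H$ in~(\ref{H}) affinely through the split fractions, the Carath\'eodory solution of~(\ref{sistemaccoppiato}) depends continuously on the uniform convergence of $z$ and $f$ together with the weak-$*$ convergence of $\mu$; Remark~\ref{rholip} guarantees the uniform Lipschitz bound $\tilde L$ that keeps all outputs inside $X$. Consequently $\rho'_{\varepsilon_k}\to\rho'$ uniformly, where $\rho'$ is the density generated by $\widetilde\rho$, $z$ and $\mu$; as $\rho_{\varepsilon_k}=\rho'_{\varepsilon_k}$ and $\rho_{\varepsilon_k}\to\widetilde\rho$, we obtain $\widetilde\rho=\rho'\in\psi(\widetilde\rho)$.

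The main obstacle is precisely the recovery of the support condition~(i) in the weak-$*$ limit of the split functions. One must quantify, using the bound ``a control optimal at $t$ stays $\varepsilon$-optimal on $[t,t+\varepsilon/L[$'' together with the finiteness of the switching instants, that the total time on which genuinely sub-optimal controls are charged by $\mu_{\varepsilon_k}$ tends to $0$; only then does the weak-$*$ limit $\mu$ charge solely the truly optimal controls of the limit problem, which is what makes $\widetilde\rho$ a fixed point of $\psi$ rather than merely of some relaxed approximation.
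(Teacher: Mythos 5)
Your proposal takes essentially the same route as the paper: the paper's own proof of Theorem~\ref{th:MFE} is a one-line deferral to Theorem~2 of \cite{bafama} ``with suitable changes'' for the preference dynamics \eqref{evolpi}, and that argument is precisely the limiting procedure you carry out --- compactness of $X$, the Kakutani fixed points $\rho_\varepsilon$ from Lemma~\ref{lem:2}, and passage to the limit $\varepsilon\to 0$ to identify $\widetilde\rho\in\psi(\widetilde\rho)$, exactly as anticipated in Remark~3 and in the abstract. Your write-up is in fact more detailed than what the paper provides, and the obstacle you single out (recovering the support condition on the split functions in the weak-$*$ limit, so that the limit charges only genuinely optimal controls) is indeed the technical heart that the paper leaves to the cited reference.
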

\begin{proof} We proceed similarly to Theorem 2 of \cite{bafama}, but with the suitable changes due to the presence of the agent's preference dynamics \eqref{evolpi}.
\end{proof}
\begin{Remark}
In the construction of the multi-function $\psi$ we have fixed a priori the noisy parameter $\beta>0$. Hence, we actually found a multi-function $\psi_\beta$ such that $\rho_\beta \in \psi_\beta(\rho_\beta)$. Given  the simple structure of \eqref{evolpi} and \eqref{sistemaccoppiato} and to the properties of \eqref{bestresponse} and \eqref{funzlambdapunto}, it seem reasonable that for $\beta\to +\infty$ there exist a multi-function $\bar\psi$ and the corresponding fixed point $\bar\rho$ such that
$\lim_{\beta\to +\infty}\psi_\beta=\bar\psi$ and $ \lim_{\beta\to +\infty}\rho_\beta=\bar\rho$.
%
The answer seem positive, however, we have not yet covered all the details which we will pursue in future research.
\end{Remark}
%
\section{BI-LEVEL OPTIMIZATION}\label{sec:4}
In this section, we discuss how the results introduced in the previous sections can be used by a central authority (CA), e.g., the city hall, interested in controlling the value of the mass $\rho$.
For any given input flow $\lambda(t) \delta^{(0)}, t \in [0,T]$, heritage CA may be consider sustainable for their historical centers that mass of agents is closed to some reference value $\widetilde \rho$.  Typically, a CA may slow down (and sometimes also speed up) the agent flows by intervening on the width of the streets with mobile barriers. 
Formally, we can assume that the CA can consider congestion cost functions of the form
\begin{align*}
\varphi_e(\rho_e)=\alpha'_e\rho_e(t)+\alpha''_e \quad \forall\ e \in \mc E, 
\end{align*}
where $(\alpha', \alpha'')$ are parameters in a compact $K \in \RR^5\times \RR^5 $.

Then, the CA is interested in solving the bi-level problem:
\begin{align}
&\displaystyle\inf_{\left(\alpha', \alpha''\right)}\left(\sup_{\rho}\Vert\rho-\widetilde{\rho}\Vert_{\infty}\right)\nonumber \\
& \text{subject to} \quad \inf_{u(\rho) \in U_{\alpha', \alpha''}}\hat{J}(t,u) \label{eq:bil}
\end{align}
with $\hat{J}(t,u)=\{J_e(t, u^e) : e \in \mc E\}$ and
where the CA chooses the value of pair $(\alpha', \alpha'')$ in the light of the expected response of the agents. 
We claim that, if the set $ U_{\alpha', \alpha''}$ is a compact set of controls depending by $(\alpha', \alpha'')$, then
the existence of a pair $(\alpha', \alpha'')^*$ solution of~(\ref{eq:bil}) can be proved provided that a unique MFG equilibrium exists for each fixed $(\alpha', \alpha'') \in K$. 

\section{CONCLUSIONS}\label{sec:5}
In this paper the existence of a mean-field equilibrium for a
traffic flow model is provided and a bi-level optimization problem is formulated. In future research we plan to refine the optimization problem from both a theoretical and an applicative perspective, and to consider different objectives of central authority. Moreover a comparison between our MF model and the Wardrop one also through numerical simulations will be provided.
A further step could be to consider a more general networks that includes oriented cycles. For this kind of networks the backward approach used for the resolution of equations \eqref{eq:V1}-\eqref{eq:V23} cannot immediately applied.

\addtolength{\textheight}{-12cm}   

\end{document}